\documentclass[12pt,draftcls,onecolumn]{IEEEtran}
\usepackage[ruled,boxed]{algorithm2e}

\usepackage{amsmath,graphicx,color,psfrag,mathrsfs,bm}
\usepackage{hyperref}
\usepackage{pstool}
\usepackage{longtable}          
\usepackage{amsthm}
\usepackage{amssymb}
\usepackage{tikz}
\usepackage{multicol}
\newtheorem{theorem}{Theorem}[section]
\newtheorem{corollary}[theorem]{Corollary}
\newtheorem{lemma}[theorem]{Lemma}

\theoremstyle{definition}
\newtheorem{definition}{Definition}[section]
\theoremstyle{remark}
\newtheorem{remark}{Remark}[section]

\begin{document}

\title{A Structural Separation Between Chernoff and Convex-Order Optimality in Robust Testing}

\author{%
  \IEEEauthorblockN{G\"okhan  G\"ul}\\
  \IEEEauthorblockA{Preventive Cardiology and Preventive Medicine, Department of Cardiology, University Medical Center of the Johannes Gutenberg University Mainz\\
                    Clinical Epidemiology and Systems Medicine, Center for Thrombosis and Hemostasis, University Medical Center  Johannes Gutenberg University Mainz\\
                    German Center for Cardiovascular Research (DZHK), Partner Site Rhine Main, University Medical Center of the Johannes Gutenberg University Mainz\\
                    Langenbeckstra\ss e 1, 55131 Mainz, Germany\\
                    Email: goekhan.guel@unimedizin-mainz.de}
}

\maketitle

\begin{abstract}
In classical robust hypothesis testing, least favorable distributions often
simultaneously maximize all Chernoff $u$-affinities and minimize all
$f$-divergences. This paper identifies the structural mechanism that causes
this equivalence to fail in general: the cone generated by fractional power
functions $\{x^u\}_{u\in(0,1)}$ is strictly smaller than the cone of convex
functions, inducing a separation between fractional-moment dominance and
convex-order dominance. An explicit minimal counterexample is constructed on
a three-point probability space, with convex, compact uncertainty classes and
uniformly bounded likelihood ratios, for which a single pair maximizes all
Chernoff functionals uniformly yet fails to minimize a convex
$f$-divergence. It is further proved that no such separation can occur on a
two-point space. Sufficient conditions for equivalence---including stochastic
ordering of likelihood ratios---are discussed, and an open characterization
problem in the geometry of moment cones is highlighted.
\end{abstract}

\begin{IEEEkeywords}
Chernoff functional, $f$-divergence, convex order, minimax robustness, least favorable distributions, hypothesis testing.
\end{IEEEkeywords}

\IEEEpeerreviewmaketitle

\section{Introduction}\label{sec:intro}

In robust hypothesis testing, the data-generating distributions are constrained
to lie within uncertainty classes $\mathcal{G}_0$ and $\mathcal{G}_1$, and one
seeks least favorable distributions (LFDs) that optimize performance under
worst-case conditions. Two fundamental optimality criteria govern this theory.
The Chernoff functional \cite{chernoff1952}
\[
D_u(G_0, G_1) = \int g_1^u g_0^{1-u} \, d\mu, \qquad u \in (0,1),
\]
determines exponential error decay rates and characterizes asymptotic
performance. By contrast, $f$-divergences \cite{csiszar1967}
\[
D_f(G_0, G_1) = \int f\!\left(\frac{g_1}{g_0}\right) g_0 \, d\mu,
\]
for convex $f$ with $f(1)=0$, characterize finite-sample minimax optimality
through their equivalence with stochastic ordering of likelihood ratios
\cite{huber1973, oster1978}.

For classical uncertainty classes---$\varepsilon$-contamination neighborhoods,
total variation balls, and band models---the same pair of LFDs simultaneously
maximizes all Chernoff functionals and minimizes all $f$-divergences
\cite{huber1973, huber1965, fauss2018}. This coincidence underlies the
existence of finite-sample minimax robust (FMR) tests and justifies the use
of asymptotic solutions as proxies for finite-sample optimal procedures.
%

Recent extensions to Kullback--Leibler and $\alpha$-divergence neighborhoods
\cite{levy09,gul6,gul7}, distributionally robust optimization under
$f$-divergence ambiguity sets \cite{bental2009,lam2019,duchi2021}, and
kernel-based uncertainty sets \cite{SunZou2022,SchrabKim2024} have enriched
the classical framework, but the relationship between asymptotic and finite-sample
optimality criteria remains poorly understood outside the classical settings.

The known relationship is asymmetric \cite{gul2026}: if an FMR test exists,
then an AMR test exists with the same LFDs, and the AMR solution identifies
the FMR solution uniquely when the latter exists. However, neither result
asserts that AMR guarantees FMR existence. The central question left open is
whether uniform Chernoff optimality alone---without additional geometric
constraints---suffices for finite-sample minimax robustness.

The present paper shows that the answer is negative, and identifies the
structural mechanism: the cone generated by fractional power functions
$\{x^u\}_{u \in (0,1)}$ is strictly smaller than the cone of convex functions
on any compact interval containing $1$. This strict inclusion induces a gap
between fractional-moment dominance (asymptotic optimality) and convex-order
dominance (finite-sample optimality). An explicit counterexample is constructed
on a three-point space with convex, compact, disjoint uncertainty classes and
uniformly bounded likelihood ratios: a single pair of LFDs maximizes all
Chernoff functionals uniformly, yet fails to minimize a convex $f$-divergence.
Consequently, no FMR test exists for these classes despite the presence of a
strong asymptotic solution.

The construction is minimal: no such separation can occur on a two-point
probability space, and the phenomenon requires at least three points and a
non-trivial convex uncertainty class. When the set of likelihood ratios is
totally ordered in the likelihood ratio order, equivalence is restored; this
explains why classical uncertainty classes exhibit coincidence while the
constructed example does not.

The results caution against using asymptotic solutions as automatic proxies
for finite-sample optimality. A simple diagnostic emerges: if admissible
likelihood ratios cross, equivalence may fail; if they are totally ordered,
the classical theory applies. Characterizing precisely when
fractional-moment dominance implies convex-order dominance for convex sets
of likelihood ratios remains an open problem in the geometry of moment cones.

\section{Preliminaries}\label{sec:prelim}

Let $(\Omega,\mathcal{A},\mu)$ be a measurable space. All probability measures
are absolutely continuous with respect to $\mu$, with densities denoted by
lower-case letters. For densities $g_0,g_1$, define the likelihood ratio
$L=g_1/g_0$ (with the convention $0/0=0$).

\subsection{Chernoff functionals and asymptotic minimax robustness}

The Chernoff functional of order $u\in(0,1)$ is defined by
\begin{equation}\label{eq:chernoff}
D_u(G_0,G_1)=\int_{\Omega}\Bigl(\frac{g_1}{g_0}\Bigr)^{u}g_0\,d\mu
=\int_{\Omega}g_1^{u}g_0^{1-u}\,d\mu.
\end{equation}

\begin{definition}[Asymptotic minimax robustness]
A test is \emph{asymptotically minimax robust} (AMR) if there exist LFDs
$(\hat{G}_0,\hat{G}_1)$ and a parameter $u^{*}\in(0,1)$ such that
$(\hat{G}_0,\hat{G}_1)$ maximizes $D_{u^{*}}$ over
$\mathcal{G}_0\times\mathcal{G}_1$, and the error exponents under
$(\hat{G}_0,\hat{G}_1)$ dominate those under any other pair.
\end{definition}

Uniform asymptotic minimax robustness requires the same pair to maximize
$D_u$ for \emph{every} $u\in(0,1)$.

\subsection{$f$-divergences and finite-sample minimax robustness}

For a convex function $f:(0,\infty)\to\mathbb{R}$ with $f(1)=0$, the
$f$-divergence is defined by
\begin{equation}\label{eq:fdiv}
D_f(G_0,G_1)=\int_{\Omega}f\Bigl(\frac{g_1}{g_0}\Bigr)g_0\,d\mu.
\end{equation}

\begin{definition}[Single-sample minimax robustness]
A pair $(\hat{G}_0,\hat{G}_1)$ is \emph{single-sample minimax robust}
(SMR) if the likelihood ratio $\hat{l}=\hat{g}_1/\hat{g}_0$ satisfies
\begin{equation}\label{eq:smr}
G_0[\hat{l}<t]\ge\hat{G}_0[\hat{l}<t],\qquad
G_1[\hat{l}<t]\le\hat{G}_1[\hat{l}<t],
\end{equation}
for all $t\in\mathbb{R}$ and all $(G_0,G_1)\in\mathcal{G}_0\times
\mathcal{G}_1$.
\end{definition}

The SMR inequalities \eqref{eq:smr} are equivalent to minimization of all
$f$-divergences:

\begin{theorem}[\cite{huber1973, oster1978}]\label{thm:huber}
A pair $(\hat{G}_0,\hat{G}_1)$ satisfies the SMR inequalities
\eqref{eq:smr} if and only if it minimizes $D_f(G_0,G_1)$ over
$\mathcal{G}_0\times\mathcal{G}_1$ for every twice continuously
differentiable convex $f$ with $f(1)=0$.
\end{theorem}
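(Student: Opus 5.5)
The proof reduces both conditions to statements about $\hat l=\hat g_1/\hat g_0$ and the joint law of $(\hat l,\,g_1/g_0)$. The key tool is the convex-order characterization of $f$-divergence comparison. Throughout I assume, as in Huber's setting, that $\mathcal{G}_0,\mathcal{G}_1$ are convex. Minimality of $D_f$ at $(\hat G_0,\hat G_1)$ is then equivalent to a first-order (directional) condition along the segments $G_j^\theta=(1-\theta)\hat G_j+\theta G_j$, $\theta\in[0,1]$.

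\textbf{Necessity (SMR $\Rightarrow$ minimization of every $D_f$).} First I rewrite each divergence as an integral of a test-error functional. For $f\in C^2$ convex with $f(1)=0$, Taylor's formula with integral remainder gives, for $x>0$,
\[
f(x)=f'(1)(x-1)+\int_0^1 (t-x)^+ f''(t)\,dt+\int_1^\infty (x-t)^+ f''(t)\,dt .
\]
The linear term integrates to $f'(1)\bigl(G_1(\Omega)-1\bigr)$ against $g_0$, which is constant, or zero if $\mu$-mass issues are handled via the $0/0$ convention. Hence it suffices to treat $f_t(x)=(t-x)^+$ and $f_t(x)=(x-t)^+$ for each $t>0$. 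For these,
\[
D_{f_t}(G_0,G_1)=\int (t g_0-g_1)^+\,d\mu=\sup_{A}\bigl(tG_0(A)-G_1(A)\bigr),
\]
which is $t$ times the minimal Bayes risk with priors proportional to $(t,1)$, up to affine terms. Choose $A=\{\hat l<t\}$ (and its variants with $\le$, obtained by limits in $t$). Then
\[
D_{f_t}(G_0,G_1)\ge tG_0[\hat l<t]-G_1[\hat l<t]\ge t\hat G_0[\hat l<t]-\hat G_1[\hat l<t]=D_{f_t}(\hat G_0,\hat G_1),
\]
where the middle inequality is exactly \eqref{eq:smr}. The final equality holds because $\{\hat l<t\}$ is the Neyman--Pearson set for the LFD pair. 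Integrating against $f''(t)\ge0$ gives $D_f(G_0,G_1)\ge D_f(\hat G_0,\hat G_1)$.

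\textbf{Sufficiency (minimization of every $D_f$ $\Rightarrow$ SMR).} This is the harder direction, and I expect it to be the main obstacle. Minimizing all $D_{f_t}$ does not immediately give \eqref{eq:smr} for the fixed set $\{\hat l<t\}$, because the optimal set for $(G_0,G_1)$ differs. I would therefore use convexity. Fix $G_0\in\mathcal{G}_0$, keep $G_1=\hat G_1$, and consider $\theta\mapsto D_f(G_0^\theta,\hat G_1)$, which is convex and minimized at $\theta=0$. Its right derivative at $0$ is
\[
\int \bigl(f(\hat l)-\hat l f'(\hat l)\bigr)(g_0-\hat g_0)\,d\mu\;\ge0 .
\]
Let $\phi(x)=f(x)-xf'(x)$, so that $\phi'(x)=-xf''(x)$. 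Choosing $f''$ as a smooth approximation of $\delta_s/s$ makes $\phi$ approximate a step function decreasing at $s$, i.e.\ $\phi\approx -\mathbf 1[x>s]$ plus a constant. The constant integrates to zero against $g_0-\hat g_0$, so the limit gives $-G_0[\hat l>s]+\hat G_0[\hat l>s]\ge0$. This is equivalent to $G_0[\hat l\le s]\ge\hat G_0[\hat l\le s]$. Taking left limits in $s$ yields the strict-inequality version in \eqref{eq:smr}.

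The symmetric perturbation in $G_1$ has derivative $\int f'(\hat l)(g_1-\hat g_1)\,d\mu\ge0$. With $f'$ approximating $\mathbf 1[x>s]$, this gives $G_1[\hat l>s]\ge\hat G_1[\hat l>s]$, which is the second inequality in \eqref{eq:smr}.

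The delicate points in this direction are:
\begin{itemize}
\item justifying differentiation under the integral, by monotone convergence of difference quotients of convex functions;
\item the passage from $C^2$ functions to indicator limits, by dominated convergence using bounded $\phi$ and $f'$;
\item the boundary cases $\hat l\in\{0,\infty\}$.
\end{itemize}

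I would handle the boundary cases with the $0/0$ convention, treating $\hat l=\infty$ as $t$ large.
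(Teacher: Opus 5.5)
The paper does not prove this statement; it imports it from Huber--Strassen and \"Osterreicher. Your argument is the standard proof from those sources: the hinge/Bayes-risk decomposition of $f$ with the Neyman--Pearson set $\{\hat l<t\}$ for necessity, and one-sided derivatives along segments, with $f''$ concentrated at a point, for sufficiency. Its core is correct. The convexity of $\mathcal{G}_0,\mathcal{G}_1$ that you add is genuinely required, not a convenience. The printed statement omits it, and without it the ``if'' direction fails. Take $\hat g_0$ uniform on three points, $\mathcal{G}_0=\{\hat g_0\}$, and $\mathcal{G}_1=\{\hat g_1,g_1\}$ with $\hat l=(\tfrac12,1,\tfrac32)$ and $g_1/\hat g_0=(\tfrac32,1,\tfrac12)$. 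All $D_f$ tie, yet $G_1[\hat l<\tfrac34]=\tfrac12>\tfrac16=\hat G_1[\hat l<\tfrac34]$.

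Where your plan does fail is the boundary set $N=\{\hat g_0=\hat g_1=0\}$, which you propose to handle with $0/0=0$. You should work with the convention $0\cdot f(a/0)=a\,f'(\infty)$, where $f'(\infty):=\lim_{x\to\infty}f(x)/x$. You need it anyway: otherwise the linear term integrates to $f'(1)(G_1[g_0>0]-1)$, which is not constant, and the hinge representation breaks on $\{g_0=0\}$. Under this convention, the $G_0$-derivative on $N$ is $\phi(0)\,g_0$, consistent with $\hat l=0$. The $G_1$-derivative on $N$, however, is $f'(\infty)\,g_1$, not $f'(\hat l)\,g_1=f'(0)\,g_1$. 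So your second limit yields $G_1[\hat l>s]\ge\hat G_1[\hat l>s]$ only with $\hat l:=\infty$ on $N$.

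This cannot be patched inside the proof, because with $0/0=0$ the ``if'' direction is false even for convex, compact, disjoint classes. On $\{1,2,3\}$ let $\hat g_0=(0.6,0.4,0)$, $\hat g_1=(0.4,0.6,0)$, $\mathcal{G}_0=\{\hat g_0\}$, and $\mathcal{G}_1=\operatorname{conv}\{\hat g_1,\delta_3\}$. The map $\theta\mapsto D_f(\hat g_0,(1-\theta)\hat g_1+\theta\delta_3)=0.6f(\tfrac23(1-\theta))+0.4f(\tfrac32(1-\theta))+\theta f'(\infty)$ is convex. Its right derivative at $0$ is $-0.4f'(\tfrac23)-0.6f'(\tfrac32)+f'(\infty)\ge0$, so $(\hat g_0,\hat g_1)$ minimizes every $D_f$. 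Yet $\hat l=(\tfrac23,\tfrac32,0)$ and $\delta_3[\hat l<\tfrac12]=1>0=\hat G_1[\hat l<\tfrac12]$. Exchanging the roles of the classes ($\mathcal{G}_0=\operatorname{conv}\{\hat g_0,\delta_3\}$, $\mathcal{G}_1=\{\hat g_1\}$) defeats the choice $\hat l=\infty$ on $N$ in the same way.

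The repair is therefore an extra hypothesis, not a convention. Require $G(N)=0$ for all $G\in\mathcal{G}_0\cup\mathcal{G}_1$, for example by assuming every member is absolutely continuous with respect to $\hat G_0+\hat G_1$. With that hypothesis, your argument goes through as written.
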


A test is \emph{finite-sample minimax robust} (FMR) if it is SMR for every
sample size $n<\infty$.

\subsection{The asymptotic--finite-sample link}

The following results from \cite{gul2026} summarize the known relationship
between AMR and FMR:

\begin{theorem}[\cite{gul2026}]\label{thm:fmr-implies-amr}
If an FMR test exists, then an AMR test exists for the same uncertainty
classes and with the same LFDs.
\end{theorem}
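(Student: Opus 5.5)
Assuming an FMR test exists, I would pass from finite-sample dominance to the Chernoff functionals through Theorem~\ref{thm:huber}, and then show that the resulting maximizer also delivers dominant error exponents. So the LFDs $(\hat G_0,\hat G_1)$ of the FMR test will serve as the AMR pair. By definition, FMR gives SMR for every $n$, in particular for $n=1$. Theorem~\ref{thm:huber} then says that $(\hat G_0,\hat G_1)$ minimizes $D_f$ over $\mathcal{G}_0\times\mathcal{G}_1$ for every $C^2$ convex $f$ with $f(1)=0$.

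\textbf{Step 1: maximizing every Chernoff functional.} For fixed $u\in(0,1)$, take $f_u(x)=x-x^u$. It is smooth and convex on $(0,\infty)$ and satisfies $f_u(1)=0$. Since $\int (g_1/g_0)\,g_0\,d\mu=1$ whenever $G_1\ll G_0$, we have $D_{f_u}=1-D_u$. Minimizing $D_{f_u}$ is therefore the same as maximizing $D_u$, and $(\hat G_0,\hat G_1)$ maximizes $D_u$ for every $u$, and in particular for the $u^*$ attaining $\min_u \max D_u$.

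Two technical points need care here. First, $f_u$ is not $C^2$ at $0$, and the mass of $G_1$ off the support of $G_0$ may matter. I would handle both with a smoothing argument: approximate $f_u$ by $C^2$ convex functions on $[0,\infty)$, such as $f_u(x+\delta)-f_u(1+\delta)$, and pass to the limit by monotone convergence. Alternatively, I would apply the SMR inequalities \eqref{eq:smr} directly and use the layer-cake representation
\[
\int \hat l^{\,u}\,dG_0=\int_0^\infty G_0[\hat l^{\,u}>s]\,ds .
\]

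\textbf{Step 2: dominant error exponents.} I need the error exponents under $(\hat G_0,\hat G_1)$ to dominate those under any other pair. The exponent of the likelihood-ratio test based on $\hat l$, evaluated under an arbitrary $(G_0,G_1)$, is governed by the log-moment generating function of $\log\hat l$. For every $n$, the SMR property gives that $\sum_i\log\hat l(X_i)$ is stochastically larger under $G_0$ than under $\hat G_0$ and stochastically smaller under $G_1$ than under $\hat G_1$. Here I use the FMR assumption at all $n$, not only at $n=1$. The error probabilities of the $n$-sample test are therefore bounded by those under the LFDs for each $n$. Taking $-\frac1n\log$ and the limit, the Chernoff--Cramér exponents under any pair are at least those under $(\hat G_0,\hat G_1)$, which equal $-\log D_{u^*}(\hat G_0,\hat G_1)$. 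This also shows that the minimax exponent is attained.

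\textbf{Main obstacle.} I expect Step 2 to be the hardest part: the convention $0/0=0$, atoms of $\hat l$ on the threshold (which require randomization), and the interchange of limit and supremum over the classes. I would first try to avoid the interchange entirely by working with the fixed test $\hat l$. The per-$n$ dominance then transfers directly to the $\liminf$ of exponents, and no uniformity over $\mathcal{G}_0\times\mathcal{G}_1$ is needed.
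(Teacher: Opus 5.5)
The paper does not prove this statement. It is quoted from \cite{gul2026}, so there is no in-paper argument to compare against, and your proposal has to stand on its own. Its structure is sound.

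Step~1 works as you describe. $f_u(x)=x-x^u$ is convex with $f_u(1)=0$, and $D_{f_u}=1-D_u$, using the usual convention $f_u'(\infty)=1$ for any mass of $G_1$ off $\{g_0>0\}$. Your smoothness worry is largely moot, because the paper takes $f$ on $(0,\infty)$, where $f_u$ is already $C^\infty$. If you do smooth, the limit $\delta\to 0$ needs dominated convergence (for instance $|f_u(y)|\le 1+2y$), not monotone convergence, since $\delta\mapsto f_u(x+\delta)$ is not monotone.

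Your layer-cake alternative is incomplete as written. It controls $\int \hat l^{\,u}\,dG_0$, which is not $D_u(G_0,G_1)$. You can close it with the pointwise weighted AM--GM bound $g_1^u g_0^{1-u}\le u\,\hat l^{\,u-1}g_1+(1-u)\,\hat l^{\,u}g_0$. Both terms on the right are monotone in $\hat l$, so \eqref{eq:smr} gives $D_u(G_0,G_1)\le u\,E_{\hat G_1}[\hat l^{\,u-1}]+(1-u)\,E_{\hat G_0}[\hat l^{\,u}]=D_u(\hat G_0,\hat G_1)$ without invoking Theorem~\ref{thm:huber}.

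Step~2 contains an actual error: both stochastic orderings are reversed. By \eqref{eq:smr}, $G_0[\hat l<t]\ge \hat G_0[\hat l<t]$. So $\hat l$ is stochastically \emph{smaller} under $G_0$ than under $\hat G_0$, and stochastically \emph{larger} under $G_1$ than under $\hat G_1$. Taken literally, your orderings would make the error probabilities under $(G_0,G_1)$ exceed those under the LFDs, which is the opposite of what you conclude.

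With the directions corrected, the step is much easier than you expect, and FMR is needed only at $n=1$. Take the test that rejects when $\sum_{i\le n}\log\hat l(X_i)>0$. Markov's inequality bounds its false-alarm probability by $(E_{G_0}[\hat l^{\,u}])^n\le D_u(\hat G_0,\hat G_1)^n$. It bounds the miss probability by $(E_{G_1}[\hat l^{\,u-1}])^n\le D_u(\hat G_0,\hat G_1)^n$. Both hold for every $u\in(0,1)$; they use the same two expectations as above, compared through the single-sample orderings and the monotonicity of $x^u$ and $x^{u-1}$.

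Taking $u=u^*$, both exponents under any pair are at least $-\log D_{u^*}(\hat G_0,\hat G_1)$. By Chernoff's theorem this is exactly the exponent under the LFDs. The bound is non-asymptotic and uniform over $\mathcal{G}_0\times\mathcal{G}_1$, so ties, randomization and limit--supremum interchanges never arise. Your per-$n$ ordering also follows from $n=1$ alone, because first-order stochastic dominance is preserved under sums of independent terms.
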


\begin{theorem}[\cite{gul2026}]\label{thm:amr-implies-fmr}
Let $(\hat{G}_0,\hat{G}_1)$ be AMR LFDs corresponding to the minimizing
parameter $u^{*}$. Assume that the maximizer of $D_{u^{*}}$ is unique. If an
FMR test exists for the same uncertainty classes, then its LFDs coincide
$\mu$-a.e.\ with $(\hat{G}_0,\hat{G}_1)$.
\end{theorem}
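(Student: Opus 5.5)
We argue directly from the equivalence in Theorem~\ref{thm:huber}: a pair that minimizes every smooth $f$-divergence must in particular minimize the one induced by a fractional power, and that divergence is an affine function of the Chernoff functional.

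Let $(G_0^{*},G_1^{*})$ denote the LFDs of an FMR test for $\mathcal{G}_0\times\mathcal{G}_1$. By definition, FMR means SMR for every sample size. In particular $(G_0^{*},G_1^{*})$ satisfies the single-sample inequalities \eqref{eq:smr}. By Theorem~\ref{thm:huber}, it therefore minimizes $D_f$ over $\mathcal{G}_0\times\mathcal{G}_1$ for every twice continuously differentiable convex $f$ with $f(1)=0$.

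Next I would choose $f$ so that $D_f$ is tied to $D_{u^{*}}$. Take $f_{u^{*}}(x)=1-x^{u^{*}}$ on $(0,\infty)$. Since $u^{*}\in(0,1)$, the map $x\mapsto x^{u^{*}}$ is concave and $C^\infty$ on $(0,\infty)$, so $f_{u^{*}}$ is convex and $C^2$, with $f_{u^{*}}(1)=0$. Substituting into \eqref{eq:fdiv} and using $\int g_0\,d\mu=1$ gives
\[
D_{f_{u^{*}}}(G_0,G_1)=\int g_0\,d\mu-\int g_1^{u^{*}}g_0^{1-u^{*}}\,d\mu=1-D_{u^{*}}(G_0,G_1)
\]
for every admissible pair. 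Hence minimizing $D_{f_{u^{*}}}$ over $\mathcal{G}_0\times\mathcal{G}_1$ is the same as maximizing $D_{u^{*}}$ over that set. So $(G_0^{*},G_1^{*})$ is a maximizer of $D_{u^{*}}$.

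By hypothesis, $(\hat G_0,\hat G_1)$ also maximizes $D_{u^{*}}$, and this maximizer is unique. Therefore $(G_0^{*},G_1^{*})=(\hat G_0,\hat G_1)$ as pairs of measures. Equality of measures absolutely continuous with respect to $\mu$ gives equality of their densities $\mu$-a.e., which is the claim.

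The step needing most care is the identity $D_{f_{u^{*}}}=1-D_{u^{*}}$ on the set where $g_0=0<g_1$, where the likelihood ratio is infinite and the integrand $f(g_1/g_0)g_0$ must be read via the usual convention $g_0 f(g_1/g_0)\to g_1\lim_{x\to\infty}f(x)/x$. For $f_{u^{*}}$ this limit is $\lim_{x\to\infty}(1-x^{u^{*}})/x=0$. Such points therefore contribute nothing to $D_{f_{u^{*}}}$ beyond the $\int g_0\,d\mu$ term, and they contribute $g_1^{u^{*}}\cdot 0^{1-u^{*}}=0$ to $D_{u^{*}}$. Points with $g_1=0<g_0$ contribute $g_0$ to $D_{f_{u^{*}}}$ and $0$ to $D_{u^{*}}$, consistent with the identity. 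Once this convention is fixed to match the one used in Theorem~\ref{thm:huber}, the argument goes through.
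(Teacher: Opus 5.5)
Your argument is correct. The paper does not prove this result itself (it is quoted from the cited reference), but your reasoning is the one it relies on in part (iii) of Theorem~\ref{thm:main}: by Theorem~\ref{thm:huber} an FMR pair minimizes $D_f$ for $f(x)=1-x^{u^{*}}$, hence maximizes $D_{u^{*}}=1-D_f$, and uniqueness of that maximizer forces coincidence. Your treatment of the boundary sets $\{g_0=0\}$ and $\{g_1=0\}$ is extra care that the paper leaves implicit.
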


Theorem~\ref{thm:fmr-implies-amr} shows that FMR implies AMR.
Theorem~\ref{thm:amr-implies-fmr} shows that AMR \emph{identifies} the FMR
solution when FMR exists. Neither theorem asserts that AMR \emph{guarantees}
FMR existence. The question addressed in this paper is:

\begin{quote}
\emph{Does the existence of an AMR solution---even one that maximizes all
$D_u$ uniformly---imply the existence of an FMR solution?}
\end{quote}

\section{The structural gap}\label{sec:gap}

Before presenting the counterexample, the geometric obstruction responsible
for the separation is clarified.

\subsection{Fractional moments and convex order}

Let $\mathcal{L}$ be a set of nonnegative random variables with mean $1$,
representing achievable likelihood ratios. For $L\in\mathcal{L}$, define the
fractional moments $m_u(L)=\mathbb{E}[L^u]$ for $u\in(0,1)$.

\begin{definition}
A random variable $L$ \emph{dominates} $L'$ in the \emph{fractional-moment
order}, written $L\succeq_{\mathrm{fm}}L'$, if $m_u(L)\ge m_u(L')$ for all
$u\in(0,1)$.
\end{definition}

\begin{definition}
A random variable $L$ \emph{dominates} $L'$ in the \emph{convex order},
written $L\succeq_{\mathrm{cx}}L'$, if $\mathbb{E}[f(L)]\ge\mathbb{E}[f(L')]$
for all convex $f$ with $f(1)=0$.
\end{definition}

By Theorem~\ref{thm:huber}, minimizing all $f$-divergences is equivalent to
being minimal in the convex order. Maximizing all Chernoff functionals is
equivalent to being maximal in the fractional-moment order. Since $x\mapsto
x^u$ is concave for $u\in(0,1)$, maximizing $\mathbb{E}[L^u]$ corresponds to
extremality in the concave order, which is the dual of the convex order.
Minimality in the convex order is therefore a strictly stronger condition
than maximality in the fractional-moment order.

The key observation is that the cone of functions
\begin{equation}\label{eq:cone-power}
\mathcal{C}_{\mathrm{pow}}=\operatorname{cone}\{x^u:u\in(0,1)\}
\end{equation}
is strictly smaller than the cone of concave functions on any interval
$[a,b]$ with $a<1<b$. Consequently, dominance in all fractional moments does
not imply dominance in the concave (or convex) order.

\begin{remark}
The strict inclusion $\mathcal{C}_{\mathrm{pow}}\subsetneq\mathcal{C}_{\mathrm{conc}}$ 
is classical; power functions form a one-parameter subfamily of the 
infinitely generated concave cone.
\end{remark}

\subsection{Implications for robust testing}

The gap between $\mathcal{C}_{\mathrm{pow}}$ and $\mathcal{C}_{\mathrm{conc}}$
translates into the following: control of all fractional moments of the
likelihood ratio does not guarantee control of all convex functionals.
Therefore, a pair that is optimal for asymptotic error exponents (Chernoff)
need not be optimal for finite-sample worst-case risk ($f$-divergence).

The classical positive examples ($\varepsilon$-contamination, total variation,
band models) avoid this gap because their likelihood ratio sets are totally
ordered in the likelihood ratio order, which forces the fractional-moment
maximizer to coincide with the convex-order minimizer. The counterexample in
the next section exploits the first geometric configuration where this ordering
fails; see Figure~\ref{fig:cone-gap}.

\begin{figure}[t]
\centering
\begin{tikzpicture}[scale=1]
    \draw[thick, fill=gray!15] (0,0) -- (5,3) -- (5,-3) -- cycle;
    \node at (4,0) {\large $\mathcal{C}_{\mathrm{conc}}$};
    
    \draw[thick, fill=gray!50] (0,0) -- (2.2,1) -- (2.2,-1) -- cycle;
    \node at (1.3,0) {$\mathcal{C}_{\mathrm{pow}}$};
    
    \node[right] at (5,2.5) {\small concave functions};
    \node[right] at (2.2,0.8) {\small $x^u$};
    
    \draw[->, thick, >=stealth] (2.4,0.3) -- (3.6,0.3);
    \node at (3,0.6) {\small gap};
\end{tikzpicture}
\caption{Schematic illustration of the strict inclusion 
$\mathcal{C}_{\mathrm{pow}}\subsetneq\mathcal{C}_{\mathrm{conc}}$, where 
$\mathcal{C}_{\mathrm{pow}}$ is the cone generated by fractional power 
functions and $\mathcal{C}_{\mathrm{conc}}$ is the cone of concave functions. 
Dominance in all power functions (fractional-moment order) does not imply 
dominance in all concave functions; equivalently, by duality, it does not 
imply convex-order dominance.}
\label{fig:cone-gap}
\end{figure}

\section{The counterexample}\label{sec:counterexample}

An explicit counterexample is now constructed on a three-point space with
counting measure, a singleton $\mathcal{G}_0$, and $\mathcal{G}_1$ equal to
the convex hull of two probability vectors. The construction enforces
second-order contact at $u=1$, ensuring uniform Chernoff optimality while
allowing divergence at higher-order convex functionals.

\subsection{Construction}

Let $\Omega=\{1,2,3\}$ with counting measure $\mu$. Define the probability
vectors
\begin{align}
g_0      &= (0.40194407,\; 0.47871677,\; 0.11933916),\label{eq:g0}\\
\hat{g}_1 &= (0.07550977,\; 0.67274530,\; 0.25174493),\label{eq:ghat}\\
g_1'     &= (0.07295949,\; 0.68792995,\; 0.23911056).\label{eq:g1prime}
\end{align}

Set $\mathcal{G}_0=\{g_0\}$ and $\mathcal{G}_1=\operatorname{conv}
\{\hat{g}_1,g_1'\}$. Both sets are convex and compact. They are disjoint
because $g_0$ does not lie on the line segment between $\hat{g}_1$ and
$g_1'$.

The likelihood ratios are $L_i=\hat{g}_{1,i}/g_{0,i}$ and
$L_i'=g_{1,i}'/g_{0,i}$:
\begin{align}
L  &= (0.18786138,\; 1.40530966,\; 2.10949139),\label{eq:L}\\
L' &= (0.18151652,\; 1.43702916,\; 2.00362195).\label{eq:Lprime}
\end{align}

They satisfy $0.18<L,L'<2.11$ and $\mathbb{E}_{g_0}[L]=
\mathbb{E}_{g_0}[L']=1$, so the likelihood ratios are uniformly bounded and
properly normalized. The geometry is illustrated in Figure~\ref{fig:simplex}.

\begin{figure}[t]
\centering
\begin{tikzpicture}[scale=5.4]
    \coordinate (A) at (0,0);
    \coordinate (B) at (1,0);
    \coordinate (C) at (0.5,0.866);
    \draw[thick] (A) -- (B) -- (C) -- cycle;
    \node[below left] at (A) {$(1,0,0)$};
    \node[below right] at (B) {$(0,1,0)$};
    \node[above] at (C) {$(0,0,1)$};
    \coordinate (ghat) at (0.32,0.28);
    \coordinate (gprime) at (0.52,0.38);
    \draw[very thick, blue] (ghat) -- (gprime);
    \fill[blue] (ghat) circle (0.4pt) node[below left] {$\hat{g}_1$};
    \fill[blue] (gprime) circle (0.4pt) node[above right] {$g_1'$};
    \node[blue] at (0.55,0.22) {$\mathcal{G}_1$};
    \coordinate (g0) at (0.46,0.52);
    \fill[red] (g0) circle (0.5pt) node[above] {$g_0$};
    \draw[dashed, gray, thin] (g0) -- (0.42,0.33);
    \node at (0.5,-0.15) {\small Probability simplex in $\mathbb{R}^3$};
\end{tikzpicture}
\caption{The uncertainty class $\mathcal{G}_1 = \operatorname{conv}\{\hat{g}_1, g_1'\}$ 
(blue segment) and the fixed null distribution $g_0$ (red). The disjointness 
condition $g_0 \notin \mathcal{G}_1$ is geometrically apparent.}
\label{fig:simplex}
\end{figure}

\begin{remark}
The construction enforces second-order contact of $\phi(u)$ at $u=1$:
\[
\sum_{i}v_{i}=0,\qquad\sum_{i}(\log L_{i})v_{i}=0,\qquad
\sum_{i}(\log L_{i})^{2}v_{i}<0,
\]
for perturbations $v=g_1'-\hat{g}_1$. These conditions ensure
$\phi(1)=\phi'(1)=0$ and $\phi''(1)<0$. Candidate points were generated along
such directions while preserving positivity; Chernoff dominance was verified
analytically via Lemma~\ref{lem:sign}, and $f$-divergence violation was
detected using hinge functions $f(x)=(x-t)_+$.
\end{remark}

\subsection{Uniform Chernoff dominance}

For $u\in(0,1)$, the directional derivative of $D_u(g_0,\cdot)$ at
$\hat{g}_1$ toward $g_1'$ equals $u\cdot\phi(u)$, where
\begin{equation}\label{eq:phi}
\phi(u)=\sum_{i=1}^{3}L_i^{u-1}(g_{1,i}'-\hat{g}_{1,i}).
\end{equation}

Since $g_1\mapsto D_u(g_0,g_1)$ is concave, verifying $\phi(u)<0$ for all
$u\in(0,1)$ ensures that $\hat{g}_1$ is the unique maximizer over
$\mathcal{G}_1$ for every $u$.

\begin{lemma}\label{lem:sign}
Let $\psi(t)=c_1 e^{\lambda_1 t}+c_2 e^{\lambda_2 t}+c_3 e^{\lambda_3 t}$
with $\lambda_1<\lambda_2<\lambda_3$, and define $g(t)=\psi(t)
e^{-\lambda_1 t}$. If $\psi(0)=\psi'(0)=0$ and $\psi''(0)<0$, then
$\psi(t)<0$ for all $t<0$.
\end{lemma}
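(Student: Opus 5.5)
The plan is to show that $\psi$ has no zero on $(-\infty,0)$ by a Rolle-type zero count for exponential sums. Once that is done, the sign of $\psi$ on the negative half-line is fixed by its behaviour near $t=0$. The auxiliary function $g(t)=\psi(t)e^{-\lambda_1 t}$ is the tool for the count. It has the same zeros as $\psi$, with the same multiplicities, because $e^{-\lambda_1 t}>0$ and both factors are smooth. Its derivative is the two-term exponential sum
\[
g'(t)=c_2(\lambda_2-\lambda_1)e^{(\lambda_2-\lambda_1)t}+c_3(\lambda_3-\lambda_1)e^{(\lambda_3-\lambda_1)t}.
\]

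\emph{Step 1 (local sign).} By Taylor's theorem, $\psi(t)=\tfrac12\psi''(0)t^2+o(t^2)$. Since $\psi''(0)<0$, there is $\delta>0$ with $\psi(t)<0$ for $0<|t|<\delta$. In particular $\psi\not\equiv 0$.

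\emph{Step 2 (no further zeros).} Suppose, for contradiction, that $\psi(t_0)=0$ for some $t_0<0$. Then $g(t_0)=g(0)=0$, so Rolle's theorem gives $s\in(t_0,0)$ with $g'(s)=0$. Also $g'(0)=\psi'(0)-\lambda_1\psi(0)=0$. So $g'$ has two distinct zeros, $s$ and $0$. Write $g'(t)=a e^{\mu_2 t}+b e^{\mu_3 t}$ with $0<\mu_2<\mu_3$, and set $h(t)=g'(t)e^{-\mu_2 t}=a+b e^{(\mu_3-\mu_2)t}$, which has the same zeros.
\begin{itemize}
\item If $b\neq 0$, then $h$ is strictly monotone and has at most one zero. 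This is a contradiction.
\item If $b=0$, then $h(0)=0$ forces $a=0$. Hence $g'\equiv 0$, so $g$ is constant and $g\equiv g(0)=0$. Then $\psi\equiv 0$, contradicting Step 1.
\end{itemize}
So $\psi$ has no zero in $(-\infty,0)$.

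\emph{Step 3 (conclusion).} $\psi$ is continuous and zero-free on the connected set $(-\infty,0)$, so it has constant sign there. By Step 1 that sign is negative on $(-\delta,0)$. Hence $\psi(t)<0$ for all $t<0$.

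The only delicate point is the degenerate case in Step 2, where some of the coefficients $c_j$ vanish. The argument handles this by deriving $\psi\equiv 0$ from it, which contradicts $\psi''(0)<0$. The same argument also gives $\psi<0$ for $t>0$, but only $t<0$ is needed.
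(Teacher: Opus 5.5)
Your proof is correct and rests on the same key fact as the paper's proof: the two-term exponential sum $g'(t)=c_2\mu_2e^{\mu_2 t}+c_3\mu_3e^{\mu_3 t}$ can vanish at most once unless it is identically zero. The paper uses this directly, reading off $g'>0$ on $(-\infty,0)$ from $g''(0)<0$ and concluding that $g$ is increasing there, whereas you repackage it through Rolle's theorem and a connectedness argument; you also make explicit the degenerate case $c_2=c_3=0$, which the paper leaves implicit.
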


\begin{proof}
We have $g(t)=c_1+c_2 e^{\mu_2 t}+c_3 e^{\mu_3 t}$ where
$\mu_j=\lambda_j-\lambda_1>0$. Since $e^{-\lambda_1 t}>0$, it suffices to
show $g(t)<0$ for all $t<0$.

From the assumptions, $g(0)=g'(0)=0$ and $g''(0)<0$. The derivative
$g'(t)=c_2\mu_2 e^{\mu_2 t}+c_3\mu_3 e^{\mu_3 t}$ vanishes at $t=0$ and,
being a sum of two exponentials with distinct rates, has at most one zero.
Hence $g'(t)\neq 0$ for all $t\neq 0$. As $g''(0)<0$, we have $g'(h)>0$ for
small $h<0$, so $g'(t)>0$ for all $t\in(-\infty,0)$. Therefore $g$ is
strictly increasing on $(-\infty,0]$, and $g(t)<g(0)=0$ for all $t<0$.
\end{proof}

Set $\lambda_i=\log L_i$, $c_i=g_{1,i}'-\hat{g}_{1,i}$, and $t=u-1$. Then
$\phi(u)=\psi(t)$ with $\psi$ as in Lemma~\ref{lem:sign}. A direct
calculation yields:
\begin{itemize}
\item $\phi(1)=\sum_{i=1}^{3}(g_{1,i}'-\hat{g}_{1,i})=0$;
\item $\phi'(1)=\sum_{i=1}^{3}(\log L_i)(g_{1,i}'-\hat{g}_{1,i})=0$;
\item $\phi''(1)=\sum_{i=1}^{3}(\log L_i)^{2}(g_{1,i}'-\hat{g}_{1,i})
=-0.01241<0$.
\end{itemize}

Applying Lemma~\ref{lem:sign} with $t=u-1$ yields $\phi(u)<0$ for all
$u\in(0,1)$. Since the directional derivative of the concave function
$D_u(g_0,\cdot)$ at $\hat{g}_1$ along $\mathcal{G}_1$ is strictly negative,
$\hat{g}_1$ is the unique maximizer over $\mathcal{G}_1$ for every
$u\in(0,1)$. Therefore $(g_0,\hat{g}_1)$ maximizes all Chernoff functionals
over $\mathcal{G}_0\times\mathcal{G}_1$.

\subsection{$f$-divergence violation}

Take the convex function $f(x)=(x-1.43448)_+$, which satisfies $f(1)=0$.
Then
\begin{align}
D_f(g_0,\hat{g}_1)-D_f(g_0,g_1')
&= \sum_{i=1}^{3}g_{0,i}\bigl[f(L_i)-f(L_i')\bigr]\nonumber\\
&= 0.11933916\times(2.10949139-1.43448)\nonumber\\
&\quad-\bigl[0.47871677\times(1.43702916-1.43448)\nonumber\\
&\qquad\quad+0.11933916\times(2.00362195-1.43448)\bigr]\nonumber\\
&\approx 0.0114>0.\label{eq:fviolation}
\end{align}

Thus $(g_0,\hat{g}_1)$ does \emph{not} minimize this $f$-divergence over
$\mathcal{G}_0\times\mathcal{G}_1$.

\subsection{Main theorem}

\begin{theorem}\label{thm:main}
There exist convex, compact, disjoint sets $\mathcal{G}_0,
\mathcal{G}_1$ of probability measures with uniformly bounded likelihood
ratios $0<a\le L\le b<\infty$ such that:
\begin{enumerate}
\item[\rm(i)] the pair $(g_0,\hat{g}_1)\in\mathcal{G}_0\times
\mathcal{G}_1$ maximizes $D_u(G_0,G_1)$ for every $u\in(0,1)$;
\item[\rm(ii)] the pair $(g_0,\hat{g}_1)$ does not minimize
$D_f(G_0,G_1)$ for the convex function $f(x)=(x-1.43448)_+$;
\item[\rm(iii)] consequently, no finite-sample minimax robust test exists for
$\mathcal{G}_0,\mathcal{G}_1$.
\end{enumerate}
\end{theorem}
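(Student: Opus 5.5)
The plan is to take the explicit three-point construction of Section~\ref{sec:counterexample}, with $\mathcal{G}_0=\{g_0\}$ and $\mathcal{G}_1=\operatorname{conv}\{\hat g_1,g_1'\}$, and verify the structural hypotheses first. Both classes are convex and compact: a singleton, and a segment in the simplex. They are disjoint because $g_0$ is not an affine combination of $\hat g_1$ and $g_1'$; checking one coordinate suffices, for instance the first coordinate of $g_0$ is $0.40$ while both endpoints have first coordinate below $0.08$. For uniform boundedness, every $g_1\in\mathcal{G}_1$ has the form $g_1=\lambda\hat g_1+(1-\lambda)g_1'$, so its likelihood ratio $\lambda L+(1-\lambda)L'$ lies componentwise between the values listed in \eqref{eq:L} and \eqref{eq:Lprime}. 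Hence $a=0.18$ and $b=2.11$ work for every admissible pair.

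For (i), I reduce the problem to one variable. Write $h(\lambda)=D_u(g_0,\hat g_1+\lambda(g_1'-\hat g_1))$ for $\lambda\in[0,1]$. Because $x\mapsto x^u$ is concave and the parametrization is affine, $h$ is concave. Its right derivative at $\lambda=0$ is $u\,\phi(u)$, with $\phi$ as in \eqref{eq:phi}. For a concave function, $h'(0^+)<0$ implies $h(\lambda)\le h(0)+\lambda h'(0^+)<h(0)$ for every $\lambda>0$, so $\lambda=0$ is the unique maximizer.

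It therefore suffices to show $\phi(u)<0$ on $(0,1)$. I would apply Lemma~\ref{lem:sign} with $\lambda_i=\log L_i$, $c_i=g'_{1,i}-\hat g_{1,i}$ and $t=u-1\in(-1,0)$. The $\lambda_i$ are distinct and increasing because $L_1<L_2<L_3$. The three contact conditions $\phi(1)=0$, $\phi'(1)=0$ and $\phi''(1)<0$ come from the displayed calculation. The main obstacle is here: the data are given to eight digits, so the equalities $\phi(1)=0$ and $\phi'(1)=0$ hold only approximately. To make the argument rigorous, I would either take the vectors to be defined exactly by the construction, with $g_1'$ chosen so that $v$ lies exactly in the kernel of the two linear constraints, or show that the strict inequality survives small perturbations. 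For the second route, note that if $\phi'(1)=\varepsilon$ is tiny, the zero of $g'$ shifts to $t_0=O(\varepsilon)$. The sign conclusion then needs separate care near $t=0$; this is exactly why an exact construction is preferable.

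Part (ii) is the finite computation \eqref{eq:fviolation}. Only the third coordinate of $L$ exceeds $1.43448$, while the second and third coordinates of $L'$ do, which yields a positive difference of about $0.0114$.

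For (iii), I argue by contradiction. Suppose an FMR test exists. Then it is SMR with some LFDs. By Theorem~\ref{thm:fmr-implies-amr}, these are also AMR LFDs, so they maximize $D_{u^*}$. By (i), that maximizer is unique and equals $(g_0,\hat g_1)$, so Theorem~\ref{thm:amr-implies-fmr} identifies the SMR pair with $(g_0,\hat g_1)$. By Theorem~\ref{thm:huber}, this pair would then minimize every $f$-divergence. The hinge $f$ in (ii) is not $C^2$, but it is a monotone limit of smooth convex functions $f_n$ with $f_n(1)=0$, for instance mollified hinges shifted to vanish at $1$. On a three-point space, convergence of $D_{f_n}$ is immediate, so the minimization passes to the limit, which contradicts (ii).
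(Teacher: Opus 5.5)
Your proposal is correct and follows essentially the paper's route: concavity of $D_u(g_0,\cdot)$ along the segment plus Lemma~\ref{lem:sign} for (i), the explicit hinge computation for (ii), and uniqueness of the Chernoff maximizer combined with Theorems~\ref{thm:fmr-implies-amr} and~\ref{thm:huber} for (iii); you also add two points of rigor the paper omits, namely smoothing the hinge to meet the $C^2$ hypothesis of Theorem~\ref{thm:huber} and the effect of rounding in the data. On rounding, $\phi(1)=0$ is in fact exact because both vectors sum to $1$, and for the literal eight-digit vectors $\phi'(1)\approx +6\times 10^{-9}>0$, which is the harmless sign: since $c_2>0>c_3$, the unique zero of $g'$ lies at some $t_0>0$, so $g$ is still strictly increasing on $(-\infty,0]$ and the stated numbers already work, which makes your exact-construction fix valid but unnecessary.
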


\begin{proof}
Parts (i) and (ii) follow from Sections \ref{sec:counterexample}.2 and
\ref{sec:counterexample}.3. For (iii), Theorem~\ref{thm:huber} implies that
any FMR pair minimizes all $f$-divergences. By (i), $(g_0,\hat{g}_1)$ is the
unique maximizer of all $D_u$, so any FMR pair must coincide with it. By
(ii), this pair fails to minimize the hinge $f$-divergence, a contradiction.
\end{proof}

\section{Minimality of the construction}\label{sec:minimality}

The counterexample is minimal in two senses: the number of points in the
sample space, and the complexity of $\mathcal{G}_1$.

\subsection{Two points are insufficient}

\begin{theorem}\label{thm:2point}
Let $\Omega=\{1,2\}$ with counting measure. Let $\mathcal{G}_0=\{g_0\}$
and $\mathcal{G}_1=\operatorname{conv}\{g_1,g_1'\}$ be disjoint convex
sets of probability vectors with $g_0\notin\mathcal{G}_1$. If
$\hat{g}_1\in\mathcal{G}_1$ maximizes $D_u(g_0,g_1)$ for all
$u\in(0,1)$, then $(g_0,\hat{g}_1)$ minimizes every convex
$f$-divergence over $\mathcal{G}_0\times\mathcal{G}_1$.
\end{theorem}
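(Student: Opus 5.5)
On a two-point space the segment $\mathcal{G}_1$ is one-dimensional, so the plan is to parametrize it by a single scalar and show that both families of functionals are monotone along it in the same direction. Write $g_0=(p,1-p)$ with $p\in(0,1)$; if $p\in\{0,1\}$ every $D_f$ and $D_u$ is a function of a single coordinate, and the argument simplifies. Write a generic element of $\mathcal{G}_1$ as $g_1=(q,1-q)$, where $q$ ranges over a closed interval $[q_-,q_+]$ determined by the endpoints $g_1,g_1'$. Disjointness means $p\notin[q_-,q_+]$. By the symmetry that swaps the two points of $\Omega$ (which preserves both $D_u$ and $D_f$), assume $p<q_-$, so that every $q$ in the interval satisfies $q>p$.

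For any convex $f$ (and for each concave $x\mapsto x^u$), the key step is to consider
\[
F(q)=p\,f\!\left(\tfrac{q}{p}\right)+(1-p)\,f\!\left(\tfrac{1-q}{1-p}\right),\qquad q\in[0,1].
\]
This function is convex in $q$, since it is a sum of convex functions of affine maps, and it attains its minimum at $q=p$. Indeed, by Jensen's inequality (the weights $p,1-p$ average the two likelihood ratios to $1$), $F(q)\ge f(1)=0=F(p)$. A convex function minimized at $p$ is nondecreasing on $[p,1]$. Hence on $[q_-,q_+]\subset(p,1]$ every $D_f(g_0,\cdot)$ is minimized at $q=q_-$, the endpoint closest to $g_0$.

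The same argument applied to the concave map $x\mapsto x^u$ shows that $q\mapsto D_u(g_0,(q,1-q))$ is concave with maximum at $q=p$, equal to $1$. It is therefore nonincreasing on $[p,1]$. To get strict monotonicity, note that
\[
D_u=p^{1-u}q^u+(1-p)^{1-u}(1-q)^u
\]
is strictly concave in $q$ for $u\in(0,1)$. A strictly concave function maximized at $p$ is strictly decreasing on $[p,1]$, so the unique maximizer of $D_u$ over $\mathcal{G}_1$ is $q_-$, for every $u$. It follows that any $\hat g_1$ maximizing all $D_u$ (indeed even a single $D_u$) must equal $(q_-,1-q_-)$. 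By the previous paragraph this point minimizes every convex $f$-divergence, which proves the claim.

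The only delicate step is making sure that $\hat g_1$ coincides with the $f$-minimizer, rather than merely both being extremal in some weak sense. This is why I would rely on strict concavity of $D_u$ in $q$, which forces uniqueness of the maximizer. For $f$ I only need weak monotonicity, so degenerate choices such as affine $f$ cause no trouble. The degenerate cases $p\in\{0,1\}$ need a brief separate remark. There $D_u$ reduces to $q^u$ or $(1-q)^u$, which is again strictly monotone in $q$. $D_f$ reduces to $f(q/p)\,p$ or the analogous expression with the likelihood ratio on the other coordinate; if one adopts the convention $0/0=0$ literally, this case can simply be excluded by assuming $g_0$ has full support.
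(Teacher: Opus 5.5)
Your proof is correct, and its first half matches the paper's. Both reduce $\mathcal{G}_1$ to an interval in one scalar ($q$ for you, $a=q/p$ in the paper). Both use strict concavity of $D_u$, together with its global maximum $1$ at $g_0$, to show that the endpoint nearest $g_0$ is the unique maximizer for every $u$.

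The second half takes a different route. The paper computes $\operatorname{Var}(L)=\frac{p}{1-p}(a-1)^2$, asserts that for two-point likelihood ratios with fixed mean the variance order coincides with the convex order, and concludes via Theorem~\ref{thm:huber}. You instead note that for each convex $f$ the map $q\mapsto D_f(g_0,(q,1-q))$ is convex, nonnegative by Jensen, and zero at $q=p$, hence nondecreasing away from $p$.

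The paper's route yields the structural statement that $\hat L$ is convex-order minimal, which fits its fractional-moment versus convex-order narrative. However, its variance step is valid only because disjointness puts all achievable $a$ on one side of $1$, which the paper leaves implicit. For ratios on opposite sides of $1$, a smaller variance does not imply convex-order dominance unless $p=1/2$.

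Your route is self-contained, avoids that subtlety, and in fact proves more. In any finite dimension, if the segment $\mathcal{G}_1$ is collinear with a fully supported $g_0$, the nearest endpoint both maximizes every $D_u$ and minimizes every $D_f$. On two points this collinearity is automatic, since the simplex is itself a segment. That gives a cleaner explanation of why the three-point counterexample needs a segment not aligned with $g_0$.

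One small correction: your remark that the degenerate case $p\in\{0,1\}$ ``simplifies'' is wrong if the term where $g_0$ vanishes is simply dropped. For $p=1$, $f(x)=x\log x$ and $[q_-,q_+]\subset(1/e,1)$, you get $D_f=q\log q$, which is minimized at $q_-$, not at $q_+$. So either assume full support, as the paper does, or adopt the convention $0\cdot f(x/0)=x\lim_{s\to\infty}f(s)/s$, under which your convexity argument goes through.
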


\begin{proof}
Write $g_0=(p,1-p)$ with $p\in(0,1)$. Any $g_1\in\mathcal{G}_1$ induces a
likelihood ratio $L=(a,b)$ with $pa+(1-p)b=1$. As $g_1$ varies over the line
segment $\mathcal{G}_1$, the first coordinate $a$ varies over a closed
interval $[a_{\min},a_{\max}]$ not containing $1$. The second coordinate is
determined by $b=(1-pa)/(1-p)$.

For $u\in(0,1)$, the Chernoff functional is
\[
D_u(a)=pa^u+(1-p)\Bigl(\frac{1-pa}{1-p}\Bigr)^u,
\]
a strictly concave function of $a$. Because $a=1$ is excluded, the maximum
on $[a_{\min},a_{\max}]$ is attained at the endpoint closest to $1$. This
endpoint is independent of $u$, so the same $\hat{g}_1$ maximizes $D_u$ for
every $u\in(0,1)$.

The variance of the likelihood ratio is
\[
\operatorname{Var}(L)=p(a-1)^2+(1-p)(b-1)^2
=\frac{p}{1-p}(a-1)^2,
\]
a strictly increasing function of $|a-1|$. Since the maximizer uses the
endpoint closest to $1$, the corresponding likelihood ratio has the smallest
variance among all achievable likelihood ratios. On a two-point space with
fixed mean, variance ordering coincides with the convex order. Hence
$\hat{L}$ is minimal in the convex order, and by Theorem~\ref{thm:huber},
$(g_0,\hat{g}_1)$ minimizes every convex $f$-divergence.
\end{proof}

\begin{corollary}
No counterexample to the implication ``uniform Chernoff maximization $\Rightarrow$
$f$-divergence minimization'' can be constructed on a two-point probability
space.
\end{corollary}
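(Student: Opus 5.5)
The statement to prove is the Corollary, and it is immediate from Theorem~\ref{thm:2point}. The plan is to argue by contradiction against the defining features of a counterexample as constructed in Section~\ref{sec:counterexample}. A counterexample on a two-point space would consist of a singleton $\mathcal{G}_0=\{g_0\}$ and a convex set $\mathcal{G}_1$ disjoint from it, together with a pair $(g_0,\hat g_1)$. This pair would maximize $D_u$ for every $u\in(0,1)$ and yet fail to minimize some convex $f$-divergence.

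The first step is to reduce the general convex $\mathcal{G}_1$ to the segment case covered by Theorem~\ref{thm:2point}. On $\Omega=\{1,2\}$ the probability simplex is one-dimensional. Hence any convex compact $\mathcal{G}_1$ is a closed segment $\operatorname{conv}\{g_1,g_1'\}$, possibly degenerate, and it excludes $g_0$ by disjointness.

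The second step is to invoke Theorem~\ref{thm:2point}. It gives directly that $(g_0,\hat g_1)$ minimizes every convex $f$-divergence over $\mathcal{G}_0\times\mathcal{G}_1$. This contradicts the assumed failure, so no counterexample exists.

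The one point needing care is the degenerate case in which $\mathcal{G}_1$ is a single point. There the claim is trivial, since only one pair exists. The passage from ``minimizes all $f$-divergences'' to the statement of the corollary needs nothing further, because Theorem~\ref{thm:huber} is not even required: Theorem~\ref{thm:2point} already asserts minimization for every convex $f$.

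I do not expect a genuine obstacle. The substance lies entirely in Theorem~\ref{thm:2point}, specifically in the fact that on two points with fixed mean the convex order is a total order indexed by $|a-1|$. If a careful reader questions that fact, I would supply a one-line justification. For mean-one two-point laws with atoms at $a<1<b$, the law is determined by $a$. As $a$ moves away from $1$, the distribution is a mean-preserving spread of the closer one. Hence $\mathbb{E}f(L)$ is monotone in $|a-1|$ for every convex $f$.
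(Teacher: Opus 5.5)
Your proposal is correct and matches the paper, which states the corollary as an immediate consequence of Theorem~\ref{thm:2point} with the same implicit scope (singleton $\mathcal{G}_0$ and a convex $\mathcal{G}_1$ disjoint from it). Your two additions make explicit what the paper leaves implicit: every compact convex subset of the one-dimensional simplex is a possibly degenerate segment, and, because the weights $(p,1-p)$ are fixed by $g_0$, the farther likelihood ratio is a mean-preserving spread of the nearer one.
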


Equivalently, on a two-point space the fractional-moment order and the convex 
order coincide.

\subsection{Singleton uncertainty classes are trivial}

If $\mathcal{G}_1=\{g_1\}$ is a singleton, then $(g_0,g_1)$ trivially
minimizes all $f$-divergences. Thus a non-trivial counterexample requires a non-singleton convex uncertainty class. The convex hull of two points is the smallest non-trivial convex set.

\begin{remark}
Theorem~\ref{thm:2point} shows that the obstruction requires at least three
points \emph{and} at least two generating measures for $\mathcal{G}_1$. The
counterexample in Section~\ref{sec:counterexample} uses exactly three points
and $\mathcal{G}_1=\operatorname{conv}\{\hat{g}_1,g_1'\}$, so it is minimal
in both the sample-space dimension and the uncertainty-class complexity.
\end{remark}

\section{Sufficient conditions for equivalence}\label{sec:sufficient}

The classical positive examples share a common property: the least favorable
likelihood ratio is stochastically ordered relative to all other admissible
pairs. Specifically, $\hat{L}$ satisfies the SMR inequalities \eqref{eq:smr},
which is equivalent to minimizing all $f$-divergences by
Theorem~\ref{thm:huber}.

In classical uncertainty classes---$\varepsilon$-contamination neighborhoods,
total variation balls, and band models---the least favorable likelihood ratio
is an envelope function that bounds all admissible likelihood ratios. This
envelope structure ensures simultaneous maximization of all Chernoff
functionals and minimization of all $f$-divergences. The counterexample in
Section~\ref{sec:counterexample} shows that when no such envelope structure
exists, the equivalence breaks.

\section{Discussion}\label{sec:discussion}

\subsection{Implications for robust test design}

The results show that asymptotic solutions cannot be used as automatic
proxies for finite-sample optimality, even when they are uniform across all
Chernoff orders. A simple diagnostic emerges: if the admissible likelihood
ratios cross, the equivalence may fail; if they are totally ordered, the
classical theory applies.

Crossing likelihood ratios arise naturally in modern robust inference whenever
uncertainty classes are constructed from data-driven divergence balls or moment
constraints rather than from classical $\varepsilon$-contamination
neighborhoods. In such settings, the admissible likelihood ratios need not form
a monotone family, and the diagnostic developed here becomes essential.

This diagnostic is essentially minimal: The counterexample shows that crossing occurs
already with three points and two measures---the minimal configuration
admitting a separation. Therefore, practitioners designing robust tests for
non-standard uncertainty models should verify whether the likelihood ratio set
is totally ordered before assuming that an asymptotic solution carries
finite-sample guarantees.

\subsection{The moment cone characterization problem}

The core open problem raised by this work is the characterization of when
fractional-moment dominance implies convex-order dominance. Formally:

\begin{quote}
\emph{Let $\mathcal{L}$ be a convex set of nonnegative random variables with
mean $1$. Under what conditions on $\mathcal{L}$ does $L\succeq_{\mathrm{fm}}
L'$ for all $L'\in\mathcal{L}$ imply $L\succeq_{\mathrm{cx}}L'$ for all
$L'\in\mathcal{L}$?}
\end{quote}

The results show:
\begin{itemize}
\item Sufficient: $\mathcal{L}$ totally ordered in the likelihood ratio order.
\item Crossing likelihood ratios suffice to produce separation
(Theorem~\ref{thm:main}).
\end{itemize}

The gap between these conditions is narrow but non-empty. A complete
characterization would likely involve the Choquet theory of the cone
$\mathcal{C}_{\mathrm{pow}}$ and its extreme rays.

\subsection{Extension to infinite-dimensional classes}

The counterexample is finite-dimensional, but the phenomenon persists for
infinite-dimensional classes. Kullback--Leibler divergence balls provide a
known instance where the Chernoff maximizer depends on $u$, so the uniform
condition fails. It would be interesting to construct an infinite-dimensional
counterexample where a uniform Chernoff maximizer exists but fails to minimize
an $f$-divergence. Such a construction might involve densities with
interleaved crossing regions analogous to the three-point pattern exhibited
here.

\section{Conclusion}

Uniform maximization of all Chernoff functionals does not imply minimization
of all $f$-divergences, even under convexity, compactness, disjointness, and
bounded likelihood ratios. This exhibits a structural separation between
fractional-moment dominance and convex-order dominance, and provides the first
explicit example where asymptotic minimax robustness exists in its strongest
form while finite-sample minimax robustness does not.

The counterexample is minimal: it uses three points and the convex hull of
two measures, and no counterexample can exist on a two-point space. The
obstruction is the strict inclusion of the power-function cone in the
convex-function cone, which becomes manifest as soon as the likelihood ratio
set admits crossing elements.

The results caution against using asymptotic solutions as automatic proxies
for finite-sample optimality, and identify the stochastic ordering of
likelihood ratios as the key structural property governing the relationship
between the two regimes. Characterizing the exact boundary between equivalence
and separation remains an open problem in the geometry of moment cones. 
The classical equivalence between Chernoff and $f$-divergence optimality in
robust testing is therefore not a consequence of asymptotic minimaxity itself,
but rather of additional geometric ordering properties of the admissible
likelihood ratio family. When these ordering properties fail, the equivalence
fails with them.

The present results suggest that quantization may fundamentally alter the relationship between asymptotic and finite-sample robustness. In decentralized detection, binary quantization collapses the likelihood-ratio geometry to a one-dimensional structure on which fractional-moment and convex-order optimality coincide, whereas larger alphabets permit separation. Characterizing quantizers that preserve or restore this equivalence remains an open problem.

\end{document}